\newtheorem{thm}{Theorem}[section]
\newtheorem{lem}{Lemma}[section]
\newtheorem{rem}{Remark}[section]
\newtheorem{prp}{Proposition}[section]
\newtheorem{prob}{Problem}[section]
\newcommand{\R}{\mathbb{R}}
\newcommand{\Rn}{\mathbb{R}^n}
\newcommand{\sphere}{\mathbb{S}^{n-1}}
\def\bpf{\begin{proof}}
	\def\epf{\end{proof}}
\title[A solution to Banach conjecture]{A solution to Banach conjecture}
\author{Ning Zhang}
\address{School of Mathematics and Statistics, Huazhong University of Science and Technology, 1037 Luoyu Road, Wuhan, Hubei 430074, China}
\email{nzhang2@hust.edu.cn}
\subjclass[2010]{52A20, 52A22}
\keywords{Ellipsoid characterization, convex body, central sections, John ellipsoid.}
\begin{document}
\maketitle

\begin{abstract}
In this paper, we begin by constructing global linear maps on \((n-2)\)-dimensional subspaces, derived from the local continuity of linear transformations among central sections of a convex body. Using these linear maps, we subsequently establish a full proof of Banach’s isometric subspace problem in finite-dimensional spaces, extending Gromov’s earlier results.
\end{abstract}

\section{Introduction}
In this paper, we establish a positive answer to the following problem for finite $n$.
\begin{prob}
    Let $(V,\|\cdot\|)$ be a normed vector space (over $\R$) such that for some fixed $n$, $2\leq n\leq \mbox{dim}(V)$, all $n$-dimensional linear subspaces of $V$ are isometric. Is $\|\cdot\|$ neccessarily a Eulidean norm (i.e. an inner product one)?
\end{prob}

This problem goes back to Banach in 1932 \cite[Remarks on Chapter XII]{Banach1932} and is often referred to as the "Banach conjecture." The conjecture asserts that the answer is always affirmative. It has been confirmed in many, though not all, dimensions. Auerbach, Mazur, and Ulam established the conjecture for the case $n=2$ in \cite{AMU}. Dvoretzky \cite{Dvoretzky1959} later proved it for infinite-dimensional spaces $V$ and all $n\geq 2$. Gromov \cite{Gromov1967} showed that the answer is positive whenever $n$ is even or $\dim(V)\geq n+2$. Bor, Hern\'andez-Lamoneda, Jim\'enez-Desantiago, and Montejano extended Gromov’s theorem to all $n$ congruent to $1$ modulo $4$, with the exception of $n=133$; see \cite{BHJM2021}. More recently, Ivanov, Mamaev, and Nordskova \cite{IMN2023} obtained an affirmative solution for $n=3$. This outcome leads from the analysis of sections back to the entire convex body, which provides the original insight for addressing this problem. The problem can also be formulated for complex normed spaces; see \cite{BM2021}, \cite{Gromov1967}, and \cite{M1971} for further details.

Dvoretzky and Gromov reduced the problem to the situation where $\mbox{dim}(V)=n+1$ for some finite integer $n\geq 2$. Then, the problem can be restated in geometric terms as follows.

\begin{prob}
    Let $n\geq 3$ and let $K\subseteq \Rn$ be an origin-symmetric convex body. Assume that all intersections of $K$ with $(n-1)$-dimensional linear subspaces are linearly equivalent. Is $K$ neccessarily an ellipsoid?
\end{prob}

The symmetry condition follows directly from Montejano in \cite{Montejano1991}: If $K\subseteq \Rn$ is a convex body and all intersections of $K$ with $n$-dimensional linear subspaces are affine equivalent, then either $K$ is symmetric with respect to $0$, or $K$ is a (not necessarily centered) ellipsoid. Our principal result is the following theorem, which provides affirmative answers for every finite $n \geq 4$.

\begin{thm}\label{thm 1.1}
    Let $K \subset \Rn$ with $n \ge 4$ be an origin-symmetric convex body. Fix some direction $\xi_0 \in \sphere$. Suppose that for every $\xi \in \sphere$, there exists a linear map $\phi_\xi \in GL(n)$ such that
\[
K \cap \xi^\perp = \phi_\xi\bigl(K \cap \xi_0^\perp\bigr)
\]
and moreover $\phi_\xi(\xi_0) = \xi$. Then $K$ must be an ellipsoid.
\end{thm}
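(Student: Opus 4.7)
The plan is to construct, from the John and L\"owner ellipsoids of the central sections, two origin-symmetric star bodies that sandwich $K$, and to force both to be ellipsoids using the rigidity supplied by the family $\{\phi_\xi\}$. For each $\xi\in\sphere$ let $J_\xi$ and $L_\xi$ denote respectively the John and L\"owner ellipsoids of the origin-symmetric body $K\cap\xi^\perp\subset\xi^\perp$. Since both are uniquely determined and affine-covariant, the hypothesis $K\cap\xi^\perp=\phi_\xi(K\cap\xi_0^\perp)$ yields $J_\xi=\phi_\xi(J_{\xi_0})$ and $L_\xi=\phi_\xi(L_{\xi_0})$ for every $\xi$; the extra normalization $\phi_\xi(\xi_0)=\xi$ forces $\phi_\xi$ to respect the splittings $\Rn=\xi_0^\perp\oplus\R\xi_0$ and $\Rn=\xi^\perp\oplus\R\xi$. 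Now assemble the two radial envelopes
\[
S^-:=\bigcup_{\xi\in\sphere}J_\xi, \qquad S^+:=\bigcup_{\xi\in\sphere}L_\xi,
\]
both origin-symmetric star bodies in $\Rn$. From $J_\xi\subseteq K\cap\xi^\perp\subseteq L_\xi$ and $K=\bigcup_\xi(K\cap\xi^\perp)$ we obtain the sandwich $S^-\subseteq K\subseteq S^+$.

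The crux is to show that $S^-$ and $S^+$ are themselves ellipsoids in $\Rn$. Writing $\rho_{J_\xi}(u)=\rho_{J_{\xi_0}}(\phi_\xi^{-1}u)$ and using the block decomposition of $\phi_\xi$ coming from the splittings above, I would argue that the quadratic form defining $J_{\xi_0}$ propagates, through the normalization $\phi_\xi(\xi_0)=\xi$, to a single positive-definite quadratic form on $\Rn$ whose unit level set is exactly $S^-$; the analogous argument then works for $S^+$. Once $S^\pm$ are ellipsoids, each section $S^-\cap\xi^\perp$ is an ellipsoid in $\xi^\perp$ contained in $K\cap\xi^\perp$ and containing $J_\xi$, so maximality of the John ellipsoid yields $S^-\cap\xi^\perp=J_\xi$, and symmetrically $S^+\cap\xi^\perp=L_\xi$. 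The sandwich then gives $J_\xi\subseteq K\cap\xi^\perp\subseteq L_\xi$ with both outer terms sections of ellipsoids in $\Rn$, and the uniformity of the family $\{\phi_\xi\}$ should force $J_\xi=L_\xi$ for every $\xi$. Thus every central section of $K$ is an ellipsoid, and the classical theorem that a convex body all of whose central hyperplane sections are ellipsoids must itself be an ellipsoid finishes the proof.

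The main obstacle is establishing that $S^-$ and $S^+$ are ellipsoids. The maps $\phi_\xi$ are only fibrewise symmetries of the sections, not global symmetries of $K$; the selection $\xi\mapsto\phi_\xi$ is unique only modulo the stabilizer of $J_{\xi_0}$ and, precisely for odd $n$---the case left open by Gromov's argument---admits no continuous global choice on $\sphere$ for standard topological reasons. The role of the normalization $\phi_\xi(\xi_0)=\xi$ is to eliminate this ambiguity along the normal direction and to let the fibrewise quadratic forms defining the $J_\xi$ be glued into a single quadratic form on $\Rn$, thereby bypassing the topological obstruction that blocks a direct extension of the even-dimensional/infinite-dimensional argument.
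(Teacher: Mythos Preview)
Your proposal identifies a natural construction---the union of the John ellipsoids of the sections is essentially the paper's $E_1$---but it has a genuine gap at the step you yourself flag as ``the crux'': the assertion that $S^-$ and $S^+$ are ellipsoids. The sentence ``the quadratic form defining $J_{\xi_0}$ propagates, through the normalization $\phi_\xi(\xi_0)=\xi$, to a single positive-definite quadratic form on $\Rn$'' is not an argument; the normalization pins down only the image of a single vector, and $\phi_\xi|_{\xi_0^\perp}$ is still free up to the full linear symmetry group of $K\cap\xi_0^\perp$. There is no mechanism offered for why the $(n-1)$-dimensional quadratic forms on the various $\xi^\perp$ should be restrictions of one global form, and in fact proving this is tantamount to proving the theorem. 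A second gap: even if $S^\pm$ were ellipsoids, your final sentence ``the uniformity of the family $\{\phi_\xi\}$ should force $J_\xi=L_\xi$'' is again a hope rather than a proof; two concentric ellipsoids can sandwich $K$ without being equal.

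The paper's route is quite different and avoids ever claiming that any auxiliary body is an ellipsoid. Instead of the single bodies $S^\pm$, it builds one-parameter families $E_\alpha=\bigcup_\xi\phi_\xi\bigl(J(K\cap\xi_0^\perp)\cap\alpha(K\cap\xi_0^\perp)\bigr)$ and $\tilde E_\beta=\bigcap_\xi\phi_\xi\bigl(J(K\cap\xi_0^\perp)\cup\beta(K\cap\xi_0^\perp)\bigr)$, and then runs a level-set argument: the function $(\theta,\xi)\mapsto\rho_{\phi_\xi(J(K\cap\xi_0^\perp))}(\theta)\,\|\theta\|_K$ is invariant along the orbits of the $\phi$-action, and a topological argument on the level sets $\Lambda_t\subseteq\sphere$ produces a value $t_0$ with $\Lambda_{t_0}=\sphere$. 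That single equality yields $t_0K=E_{t_0}=\tilde E_{t_0}$ directly from the radial formulas, and a short set-algebra computation then forces $J(K\cap\xi_0^\perp)=t_0(K\cap\xi_0^\perp)$, so the fixed section is already an ellipsoid. In short, the paper never tries to glue quadratic forms; it instead finds a scalar $t_0$ at which the John ellipsoid and the scaled section coincide, and that is the missing idea in your outline.
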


Indeed, Banach's conjecture requires only a linear map on $G(n,n-1)$. However, to complete the proof, we extend this linear map to the entire space by setting $\phi_\xi(\xi_0) = \xi$.

The proof of Theorem \ref{thm 1.1} is divided into four steps. We first use the level set method (see S\"uss's conjecture \cite{LZh2024, Ryabogin2012, Zh2018} for more details) to show that any point $\theta$ on $\sphere\cap \xi_0^\perp$ can form a subset $\Lambda[\theta]$ of $\sphere$ with a nonempty interior due to the local continuity of $\phi_\xi$ with respect to $\xi$. Next, we linearly transform some small spherical ball on $\sphere\cap \xi_0^\perp\cap \theta^\perp$ into the $\sphere\cap \xi^\perp\cap \eta^\perp$ where $\eta\in (\Lambda[\theta])^o$ and $\phi_\xi(\theta)=\eta$. Then, we use the local continuity of $\phi_\xi$ with respect to $\xi$ to generate global linear transformations of the $(n-2)$-dimensional subspace on $\sphere\cap\xi_0^\perp$. Once these linear transformations are obtained, Gromov's result allows us to conclude the theorem.

\section{Notation and Preliminaries}\label{s.2}
This section mainly presents some basic content, such as symbols and definitions. For more detailed content, readers can refer to the books by  Fuente \cite{Fu}, Gardner \cite{Ga}, and Schneider \cite{Sch}.

Let $\mathbb{R}^n$ denote the standard $n$-dimensional Euclidean space. For $x \in \mathbb{R}^n$, we write $\|x\|$ for its Euclidean norm. The {\it unit ball} is defined as $B^n = \{x \in \mathbb{R}^n : \|x\| \le 1\}$, and the {\it unit sphere} as $S^{n-1} = \{x \in \mathbb{R}^n : \|x\| = 1\}$. We use $\{e_1, e_2, \dots, e_n\}$ to denote the standard basis of $\mathbb{R}^n$. For any $\xi \in S^{n-1}$, we define
\[
\xi^\perp := \{x \in \mathbb{R}^n : \langle x, \xi \rangle = 0\}
\]
to be the hyperplane orthogonal to $\xi$, where $\langle x, \xi \rangle$ is the usual inner product on $\mathbb{R}^n$. Furthermore, we set
\[
\xi_+^\perp := \{x \in \mathbb{R}^n : \langle x, \xi \rangle \ge 0\}
\quad\text{and}\quad
\xi_-^\perp := \{x \in \mathbb{R}^n : \langle x, \xi \rangle \le 0\}
\]
for the two half-spaces determined by the hyperplane $\xi^\perp$.

The {\it Grassmann manifold} of $k$-dimensional subspaces of $\R^n$ is written as $G(n,k)$. Let $M_n(\mathbb{R})$ denote the set of all $n\times n$ matrices with real entries, and let $\det : M_n(\mathbb{R}) \to \mathbb{R}$ be the determinant map. We introduce the notation
\[
GL(n) := \{A \in M_n(\mathbb{R}) : \det A \neq 0\}
\]
for the group of invertible $n\times n$ real matrices,
\[
O(n) := \{A \in GL(n) : A^T A = I\}
\]
for the real orthogonal group, where $A^T$ denotes the transpose of $A$ and $I_n$ the $n\times n$ identity matrix, and
\[
SO(n) := \{A \in O(n) : \det A = 1\}
\]
for the special orthogonal group.
For any $\phi\in GL(n)$, its {\it operator norm} is defined by $\|\phi\|_{op}=\max_{x\in B^n}\|\phi(x)\|$.

A subset of $\mathbb{R}^n$ is said to be {\it convex} if, for any two points in the set, the entire closed line segment joining them lies in the set. A convex set is called a {\it convex body} if it is compact and has non-empty interior. A convex body is {\it strictly convex} if no line segment is contained in its boundary.

A compact set $L$ is called a {\it star body} if the origin $O$ lies in the interior of $L$, each line passing through $O$ intersects $L$ in a (possibly degenerate) line segment, and its {\it Minkowski gauge}, given by
$$\|x\|_L = \inf\{a\ge 0: x \in aL\},$$
is a continuous function on $\mathbb R^n$. The {\it radial function} of $L$ is defined by $\rho_L (x) = \|x\|_L^{-1}$ for all $x\in \mathbb{R}^n\setminus \{O\}$. When $x$ lies on the unit sphere $S^{n-1}$, the value $\rho_L(x)$ represents the distance from the origin to the boundary of $L$ in the direction of $x$.


The {\it level set method} (see \cite{LZh2024, Ryabogin2012, Zh2018} for more details) consists in identifying a function on the sphere that remains unchanged under linear (congruent) transformations relating sections or projections, so that one of its level sets can coincide with the entire sphere.


On the sphere $\sphere$, an {\it equator} is a $(n-2)$-dimensional subsphere of the form $\sphere \cap \xi^\perp$ for some $\xi \in \sphere$. The {\it geodesic distance} between points $\theta_1$ and $\theta_2$, written $d_{\sphere}(\theta_1,\theta_2)$, is defined as the length of the shorter geodesic segment connecting them, which is equal to the angle between $\theta_1$ and $\theta_2$. The spherical ball of radius $r$ centered at $o$ is given by
$$
\{\theta \in \sphere : d_{\sphere}(\theta,o) \leq r\}.
$$

\section{Proof of the main theorem}
We begin the proof with a special case.
\begin{thm}\label{thm 2.1}
    Let $K\subset \Rn$ for $n\geq 3$ be an origin-symmetric star body. If for any $\xi\in\sphere$, $K\cap \xi^\perp$ is a centered ellipsoid, then $K$ is an ellipsoid.
\end{thm}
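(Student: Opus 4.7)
The plan is to reduce the statement to the classical fact that a positive definite quadratic form on $\R^n$ defines an ellipsoid. Set $f(x) := \|x\|_K^2$. Since $K$ is an origin-symmetric star body, $f$ is continuous on $\R^n$, positive away from the origin, even, and homogeneous of degree $2$. If I can show that $f$ is itself a quadratic form, then automatically $f(x) = \langle Ax, x\rangle$ for a positive definite symmetric $A$, and $K = \{x \in \R^n : f(x) \le 1\}$ is the corresponding ellipsoid.

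The crucial step will be to verify the parallelogram identity
$$f(x+y) + f(x-y) = 2f(x) + 2f(y) \quad \text{for all } x,y \in \R^n.$$
If $x$ and $y$ are linearly dependent, this is a one-line check using the degree-$2$ homogeneity of $f$ together with $f(-v) = f(v)$. If $x$ and $y$ are linearly independent, then $\mathrm{span}(x,y)$ is a two-dimensional subspace, and since $n \geq 3$ it is contained in some hyperplane $H = \xi^\perp$ with $\xi \in \sphere$. By hypothesis $K \cap H$ is a centered ellipsoid, so there exists a positive definite symmetric operator $A_\xi$ on $H$ with $f(v) = \langle A_\xi v, v\rangle$ for every $v \in H$; the identity for this quadratic form specialises to the desired one at $(x,y)$. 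This is exactly where the dimension assumption $n \ge 3$ and the ellipsoidal section hypothesis are consumed.

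Once the parallelogram identity is in hand, a Jordan--von Neumann style polarization finishes the argument. I would set $B(x,y) := \tfrac14\bigl[f(x+y) - f(x-y)\bigr]$; symmetry and $B(x,x) = f(x)$ are immediate from the definition, while biadditivity follows by applying the parallelogram identity to suitably chosen pairs (for example, to $(x_1+y, x_2)$ and $(x_1-y, x_2)$ and subtracting) together with $B(0,y) = 0$. Continuity of $f$ then promotes $\Q$-linearity to $\R$-linearity, so $B$ is a symmetric bilinear form and $f$ is a quadratic form, necessarily positive definite because $f > 0$ off the origin. The only real obstacle is the first step, and the dimension hypothesis $n \ge 3$ is what makes it go through: any two linearly independent vectors share an ambient hyperplane through the origin, without which the ellipsoidal section hypothesis could not be invoked.
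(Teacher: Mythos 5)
Your proof is correct. The key reduction is sound: for linearly independent $x,y$ the plane $\mathrm{span}(x,y)$ lies in some hyperplane $\xi^\perp$ (this is where $n\ge 3$ enters), the gauge of $K$ restricted to $\xi^\perp$ coincides with the gauge of the section $K\cap\xi^\perp$, and since that section is a centered ellipsoid, $f=\|\cdot\|_K^2$ is a positive definite quadratic form there, giving the parallelogram identity at $(x,y)$; the degenerate case follows from $f(\lambda v)=\lambda^2 f(v)$, which holds because the Minkowski gauge is positively homogeneous and $K$ is origin-symmetric. The polarization step is the standard Jordan--von Neumann computation and uses the identity only at pairs where it has already been established, so $f$ is a positive definite quadratic form and $K=\{x:f(x)\le 1\}$ is an ellipsoid.

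Your route differs from the paper's in that the paper does not actually prove the statement: it cites Busemann's classical characterization of ellipsoids (Theorem 3.1 in Soltan's survey) and only adds the observation that $K$ is convex, since any two points of $K$ together with the origin span a plane whose section of $K$ is an ellipse. Your argument is a self-contained proof of essentially that classical characterization, and it has the mild advantage of working directly for star bodies without first establishing convexity -- the parallelogram law and polarization never use the triangle inequality for $\|\cdot\|_K$. What the paper's citation buys is brevity; what your argument buys is completeness and independence from the reference.
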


Theorem \ref{thm 2.1} follows from the classical characterization by Busemann of ellipsoids (see \cite[Theorem 3.1]{Soltan2019}). The convexity of the body $K$ results from the fact that if we have two points in the body, we can examine the plane they span (including the origin). Since the planar section is an ellipsoid by assumption, the entire line segment between $x$ and $y$ is contained in $K$.

Now we can prove the main theorem using the local continuity of $\phi_\xi$. First, we should introduce a couple of Lemmas.

First, we define a set-valued function $\lambda:\sphere\to GL(n)$ to be
$$
\lambda(\xi):=\{\phi_{\xi}\in GL(n): K\cap \xi^\perp=\phi_{\xi}(K\cap\xi_0^\perp)\mbox{ and }\phi_\xi(\xi_0)=\xi\}.
$$
And we define the linear symmetry group $G_{K\cap\xi_0^\perp}$ to be
$$
G_{K\cap\xi_0^\perp}:=\{g\in GL(n): g(K\cap\xi_0^\perp)=K\cap \xi_0^\perp \mbox{ and } g(\xi_0)=\xi_0\}.
$$
Note that for any $\phi_\xi, \psi_\xi\in \lambda(\xi)$, we have 
$$
\psi_\xi^{-1}\phi_\xi(K\cap\xi_0^\perp)=K\cap\xi_0^\perp,
$$
which gives $\phi_\xi\in \psi_\xi \circ G_{K\cap\xi_0^\perp}$; that is
$$
\lambda(\xi)=\phi_\xi\circ G_{K\cap\xi_0^\perp},
$$
for some $\phi_\xi\in \lambda(\xi)$. And thus, the right action of $G_{K\cap\xi_0^\perp}$ on $GL(n)$ gives the quotient $GL(n)/G_{K\cap\xi_0^\perp}$, where the set-valued function $\lambda(x)$ becomes a map $\Phi: \sphere \to GL(n)\backslash G_{K\cap\xi_0^\perp}$ with
$$
K\cap \xi^\perp=\Phi(\xi)(K\cap\xi_0^\perp)
$$
Then we need to show $\Phi$ is continuous.
\begin{lem}\label{lem 3.2}
     Let $K, \phi_\xi$ be as in Theorem \ref{thm 1.1}. Then $\Phi$ is continuous..
\end{lem}
\begin{proof}
Note that the operator norm of $\phi_\xi\in \Phi(\xi)$ is bounded, since 
$$
\|\phi_\xi\|_{op}\leq \frac{\max_{\theta\in \sphere}\rho_K(\theta)}{\min_{\theta\in \sphere}\rho_K(\theta)}.
$$
We may choose an arbitrary sequence $\{\xi_n\}_{n=1}^\infty$ converging to $\xi$, i.e., with $\lim_{n\to\infty}\xi_n=\xi$, and for each $n$ select $\phi_{\xi_n}\in \Phi(\xi_n)$. Then there exists a subsequence $\{\xi_{n_k}\}_{k=1}^\infty$ such that the corresponding operators $\phi_{\xi_{n_k}}$ converge in operator norm to some $\phi\in GL(n)$. And thus, we have
\begin{align*}
    & d_{H}(\phi(K\cap \xi_0),K\cap\xi^\perp)\\
     \leq & d_{H}(\phi(K\cap \xi_0), K\cap\xi_{n_k}^\perp)+d_H(K\cap\xi_{n_k}^\perp, K\cap\xi^\perp)\\
    \leq & d_{H}(\phi(K\cap \xi_0), \phi_{\xi_{n_k}}(K\cap\xi_0^\perp))+d_H(K\cap\xi_{n_k}^\perp, K\cap\xi^\perp)\\
    \leq & c \left|\|\phi\|_{op}-\|\phi_{\xi_{n_k}}\|_{op}\right|+d_H(K\cap\xi_{n_k}^\perp, K\cap\xi^\perp).
\end{align*}
As $k\to \infty$, we have
$$
d_{H}(\phi(K\cap \xi_0),K\cap\xi^\perp)=0.
$$
This implies that $\phi(K\cap \xi_0)=K\cap\xi^\perp$. Moreover,
$$
\phi(\xi_0)=\lim_{k\to\infty}\phi_{\xi_{n_k}}(\xi_0)=\lim_{k\to\infty}\xi_{n_k}=\xi.
$$
and hence $\phi\in \Phi(\xi)$. Consequently, we have $\Phi(\xi_{n_k})\to \Phi(\xi)$ as $k\to \infty$.

This means that for any subsequence $\{\Phi(\xi_{n_k})\}_{k=1}^\infty$, there exists a sub-subsequence $\{\Phi(\xi_{n_{k_l}})\}_{l=1}^\infty$ that converges to $\Phi(\xi)$. Consequently, we conclude that $\Phi(\xi_n) \to \Phi(\xi)$.
\end{proof}

\begin{rem}
   A proof that $\Phi:\sphere\to GL(n)/G_{K\cap\xi_0^\perp}$ is continuous is also provided in \cite[Lemma 1.5]{BHJM2021}.
\end{rem}

For a fixed $\theta \in \sphere \cap \xi^\perp_0$, Our next objective is to introduce the set $\tilde{\Lambda}[\theta]\subseteq \sphere$ by
$$
\tilde{\Lambda}[\theta] := \left\{\,\frac{\phi_\xi(\theta)}{\|\phi_\xi(\theta)\|} : \phi_\xi \in \Phi(\xi)\ \text{and}\ \xi \in \sphere \,\right\}.
$$

In general, $\tilde{\Lambda}[\theta]$ is generated from discrete subsets
$$
\{g(\theta):g\in G_{K\cap\xi_0^\perp}\},
$$
so we need to choose a specific path-connected subset of $\tilde{\Lambda}[\theta]$. Note that the linear symmetry group is compact (\cite[Lemma 2.1]{BHJM2021}), and hence we define $H\subseteq G_{K\cap\xi_0^\perp}$ to be the path-connected component that contains the identity element.
By the continuity of $\Phi$, for each $\eta\in\tilde{\Lambda}[\theta]$ with $\phi_\xi(\theta)=\eta$, we can choose a local continuous map $\phi_\vartheta:U_\xi\to GL(n)$ (see Figure \ref{f.1}), where $U_\xi$ is a neighborhood of $\xi$.
\begin{figure}[h]
	\center	\includegraphics[width=0.8\linewidth]{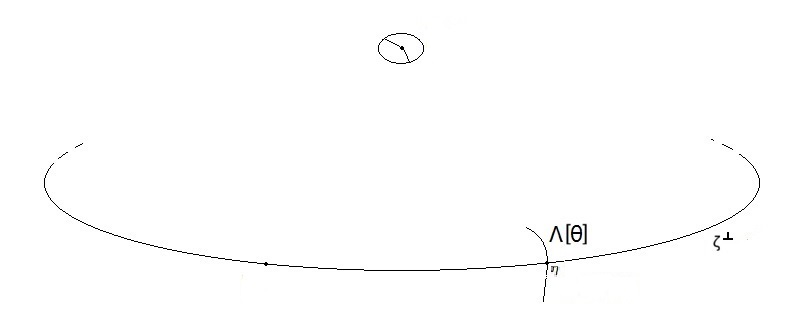}
	\caption{The set $\Lambda_t[\theta]$ passes through $\zeta^\perp$.}
	\label{f.1}
\end{figure}

Fixing $\phi_{\xi_0}$ to be the identity, we can construct a continuous path $\gamma$ from $\theta$ to $\eta$ by piecing together these local continuous maps. This path yields
$$
\phi^{-1}_\xi(\eta)\subseteq \{g(\theta):g\in H\}.
$$
We then define a subset of $\tilde{\Lambda}[\theta]$ by
\begin{align*}
\Lambda[\theta]:=\{&\frac{\phi_\xi\circ g(\theta)}{\|\phi_\xi\circ g(\theta)\|} : \phi_\xi \circ g_\xi\in \Phi(\xi) \mbox{ is local continuous},\ g\in H, \text{and}\ \xi \in \sphere \}.
\end{align*}


We now present one basic proposition of $\Lambda[\theta]$.
\begin{prp}\label{prp 3.1}
 Let $K, \phi_\xi$ be as in Theorem \ref{thm 1.1}. Then the complement $\sphere \setminus \Lambda[\theta]$ consisting of finite connected components, where each component is contained in a spherical ball with radius $r<\frac{\pi}{2}$.
\end{prp}
\begin{proof}
   If $\Lambda[\theta]\neq \sphere$, we need to show that every connected component of $\sphere\backslash \mathrm{Im}(\Lambda_t[\theta])$ is contained in some spherical ball centered at $o$ with radius $r$, namely
   $$
   \{y \in \sphere : d_{\sphere}(y,o)\leq r\}.
   $$
   Observe that for any $\eta\in \Lambda[\theta]$ and any $\zeta\in \sphere\cap \eta^\perp$, the set
   $$
   \{\phi_\vartheta\circ g_\vartheta(\theta)): \vartheta\in \sphere \cap \mathrm{span}(\eta,\zeta)\}\subseteq \Lambda[\theta]
   $$
   traces out a curve that passes through $\eta$ and enters both $\sphere \cap \zeta^\perp_+$ and $\sphere \cap \zeta^\perp_-$ (see Figure \ref{f.1}).
    Consequently, every boundary point $x$ of one branch $C$ of $\sphere \setminus \Lambda[\theta]$ must lie on a spherical ball that is tangent at $x$ and has radius $r_x < \frac{\pi}{2}$. This subsphere can be written as
   $$
   \{y \in \sphere : d_{\sphere}(y, o_x) \leq r_x\}.
   $$
   Otherwise, we may prolong a curve contained in $\Lambda[\theta]$ until it intersects $C$. By the compactness of $\Bar{C}$, this yields a uniform radius $r_0<\frac{\pi}{2}$ such that $C$ lies entirely in a spherical ball of radius $r_0$.

   On the other hand, if $\sphere \setminus \Lambda[\theta]$ has infinitely many connected components, then we can choose a sequence $\{\theta_i\}_{i=1}^\infty$ such that each $\theta_i$ lies in a different connected component. By the compactness of $\sphere\cap\xi_0^\perp$, this sequence admits a convergent subsequence, which contradicts the fact that each connected component is open.
\end{proof}

Our next lemma will provide a local linear transformation among $(n-2)$-dimensional subspaces. 
\begin{lem}\label{lem 3.2}
     Let $K$ and $\phi_\xi$ be as in Theorem \ref{thm 1.1}. Assume that for some $\zeta_0\in\sphere$ and $\theta_0\in \sphere\cap \xi_0^\perp$, there exists a neighborhood $U_{\zeta_0}$ of $\zeta_0$ that is contained in $\Lambda[\theta_0]$. Then, for any $\beta_0\in \sphere\cap \zeta_0^\perp\cap \eta_0^\perp$, one can choose $\vartheta_0\in\sphere\cap \xi_0^\perp\cap\theta_0^\perp$ such that $\vartheta_0\perp \phi^{-1}_{\eta_0}(\eta_0^\perp\cap \beta_0^\perp)$. Consequently, there exists an $(n-2)$-dimensional open spherical ball $B_\delta$ centered at $\vartheta_0$ with the property that for every $\vartheta\in B_\delta$, the subspace $\xi_0^\perp\cap\vartheta^\perp$ is linearly equivalent, via an $SO(2)$-transformation, to some $\xi_0^\perp\cap\varrho^\perp$ for a suitable $\varrho\in \sphere\cap\xi_0^\perp\cap\theta_0^\perp$.
\end{lem}
\begin{proof}
 Before begining the proof, we introduce a new notation: we write $[\zeta,\eta]$ to mean that $\phi_\eta(\theta_0)=\zeta$. For the specific pair $[\zeta_0,\eta_0]$, since $U_{\zeta_0} \subseteq \Lambda[\theta_0]$, there exists an open set $\mathcal{O}_{\eta_0}$ containing $\eta_0$. Furthermore, for each $\zeta \in U_{\zeta_0}$, there is some $\eta \in \mathcal{O}_{\eta_0}$ such that $[\zeta,\eta]$ holds, and distinct points $\zeta_1 \neq \zeta_2$ correspond to the pairs $[\zeta_1,\eta_1]$ and $[\zeta_2,\eta_2]$, gives distinct points $\eta_1 \neq \eta_2$.

    Choose a neighborhood $\mathcal{V}_{\beta_0}$ of $\beta_0$ such that, for every $\beta\in\mathcal{V}_{\beta_0}$ and every $\zeta\in U_{\zeta_0}\cap \beta^\perp$, the preimage $\phi_\eta^{-1}(\sphere\cap \eta^\perp\cap \beta^\perp)$ is well-defined; this is an $(n-2)$-dimensional subspace of $\xi_0^\perp$ containing $\theta_0$. Next, pick $\vartheta_0\in\sphere\cap\xi_0^\perp$ orthogonal to $\phi_{\eta_0}^{-1}(\sphere\cap \eta_0^\perp\cap \xi_0^\perp)$. Then
$$
\bigcup_{\substack{\eta\in\mathcal{O}_{\eta_0} \\ \beta\in\mathcal{V}_{\varsigma_0}}}\{\vartheta\in \sphere\cap \xi_0^\perp: \vartheta\perp\phi_\eta^{-1}(\sphere\cap \eta^\perp\cap \varsigma^\perp)\}
$$
forms an $(n-3)$-dimensional open neighborhood of $\vartheta_0$ in $\sphere\cap\xi_0^\perp\cap\theta_0^\perp$, which we denote by $V_{\vartheta_0}$.

    Now choose a small $(n-2)$-dimensional open spherical ball $B_{\delta}\subseteq \sphere\cap\xi_0^\perp$ centered at $\vartheta_0$ such that $B_{\delta}\cap \theta_0^\perp \subseteq V_{\vartheta_0}$. For any $\eta\in \mathcal{O}_{\zeta_0}$ and $\varsigma\in\mathcal{V}_{\varsigma_0}$, let $A_{\eta}$ denote the matrix associated with $\phi_\eta$. Then $A_\eta^{-T}(B_\delta)$ represents the collection of orthogonal sets $\cup_{\vartheta\in B_{\delta}}\phi_\eta(\vartheta^\perp)$ because
$$
\langle A_\eta \theta, A_\eta^{-T}\vartheta\rangle= \langle \theta, \vartheta \rangle,
$$
where $A_\eta^{-T}=(A_\eta^{-1})^T$ is the transpose of the inverse of $A_\eta$. Furthermore, define
$$
\mathcal{A}_\eta:=\bigcup_{\vartheta\in B_{\delta}}\frac{A_\eta^{-T}(\vartheta)}{\|A_\eta^{-T}(\vartheta)\|}.
$$

    We now have $\varsigma_0=\frac{A_{\eta_0}^{-T}(\vartheta_0)}{\|A_{\eta_0}^{-T}(\vartheta_0)\|}$ and consider the Gnomonic projection at $\varsigma_0$, denoted by 
    $$
    P_{\varsigma_0}: \sphere\cap (\varsigma_0^\perp)_+\to H_{\varsigma_0}.
    $$
    It is clear that $P_{\varsigma_0}(\mathcal{A}_\eta)$ is a $(n-2)$-dimensional ellipsoid. Moreover, we set
    $$
    P_{\vartheta_0}: \sphere\cap (\vartheta_0^\perp)_+\to H_{\vartheta_0}.
    $$
    to be the Gnomonic projection at $\vartheta_0$. Then we have the linear maps $\psi_{\eta}:P_{\vartheta_0}(B_\delta)\to P_{\varsigma_0}(\mathcal{A}_\eta)$ to be
    \[
    \psi_{\eta}(P_{\vartheta_0}(\vartheta))=P_{\varsigma_0}\left(\frac{A_\eta^{-T}(\vartheta)}{\|A_\eta^{-T}(\vartheta)\|}\right).
    \]

    Observe that $\mathcal{A}_\eta$ lies in $\sphere \cap \eta^\perp$. Consequently, any geodesic circle on $\sphere$ that is orthogonal to $\eta^\perp$ determines a line $l$ that is orthogonal to $P_{\varsigma_0}(\mathcal{A}_\eta)$.
    
    Given two parallel ellipsoids $P_{\varsigma_0}(\mathcal{A}_{\eta_1})$ and $P_{\varsigma_0}(\mathcal{A}_{\eta_2})$ and any line $l$ orthogonal to both, we look at the intersections $l\cap P_{\varsigma_0}(\mathcal{A}_{\eta_1})$ and $l\cap P_{\varsigma_0}(\mathcal{A}_{\eta_2})$, which we denote by $P_{\varsigma_0}(\varsigma_1)$ and $P_{\varsigma_0}(\varsigma_2)$, respectively. Since $P_{\varsigma_0}^{-1}l$ is a geodesic half-circle, it follows directly that $\varsigma_1^\perp\cap \eta_1^\perp$ and $\varsigma_2^\perp\cap \eta_2^\perp$ coincide.

    Now we examine the preimages $\frac{A_{\eta_1}^T\varsigma_1}{\|A_{\eta_1}^T\varsigma_1\|}$ and $\frac{A_{\eta_2}^T\varsigma_2}{\|A_{\eta_2}^T\varsigma_2\|}$, which we denote by $\vartheta_1$ and $\vartheta_2$, respectively. It is evident that there exists a linear transformation between $\sphere\cap \xi_0^\perp\cap \vartheta_1^\perp$ and $\sphere\cap \xi_0^\perp\cap \vartheta_2^\perp$.

Therefore, we can introduce a map $P_l:P_{\varsigma_0}(\sphere\cap\eta_1^\perp\cap(\varsigma_0^\perp)_+)\to P_{\varsigma_0}(\sphere\cap\eta_2^\perp\cap(\varsigma_0^\perp)_+)$ defined by
$$
P_l(l\cap P_{\varsigma_0}(\sphere\cap\eta_1^\perp\cap(\varsigma_0^\perp)_+) = l\cap P_{\varsigma_0}(\sphere\cap\eta_2^\perp\cap(\varsigma_0^\perp)_+)
$$
whenever $l\perp P_{\varsigma_0}(\sphere\cap\eta_1^\perp\cap(\varsigma_0^\perp)_+)$.
In this way, we can examine the linear map 
$$
\Psi_{\eta_1,\eta_2}:=\psi^{-1}_{\eta_2}\circ P_l\circ \psi_{\eta_1}
$$
with $l$ passing through $ P_{\varsigma_0}(\mathcal{A}_{\eta_1})$ and $ P_{\varsigma_0}(\mathcal{A}_{\eta_2})$ and there exists a linear transformation between $\sphere\cap \xi_0^\perp\cap (\Psi_{\eta_1,\eta_2})(\vartheta)^\perp$ and $\sphere\cap \xi_0^\perp\cap \vartheta^\perp$.

    Note that, for any line $l$ in $H_{\varsigma_0}$, the preimage of $l$ by $P_{\varsigma_0}$ is a geodesic half circle. For the parallel ellipsoids from $\cup_{\eta\in \mathcal{O}_{\eta_0}}P_{\varsigma_0}(\mathcal{A}_\eta)$, we claim that one of the following conditions holds.
    \begin{enumerate}
        \item The parallel ellipsoids can form an elliptical cylinder in $H_{\varsigma_0}$.
        \item There exists a $SO(2)$-affine invariant in $\bigcup_{\vartheta\in B_\delta}\vartheta^\perp\cap\xi_0^\perp$.
        \item For any $\vartheta_1, \vartheta_2\in B_\delta$, there exists a linear transformation between $\xi_0^\perp\cap\vartheta_1^\perp$ and $\xi_0^\perp\cap\vartheta_2^\perp$.
    \end{enumerate}
   To verify the statement, we discuss in cases.

   Case 1. Suppose that $\Psi_{\eta_1,\eta_2}(P_{\vartheta_0}(\vartheta))=P_{\vartheta_0}(\vartheta)$ holds for every $\vartheta\in B_\delta$ and for all $\eta_1,\eta_2\in \mathcal{O}_{\eta_0}$ such that the positions $P_{\varsigma_0}(\mathcal{A}_{\eta_1})$ and $P_{\varsigma_0}(\mathcal{A}_{\eta_2})$ are parallel. In this situation, the corresponding parallel ellipsoids can jointly form an elliptical cylinder in $H_{\varsigma_0}$ (see Figure \ref{f.2}).
\begin{figure}[h]
	\center	\includegraphics[width=0.8\linewidth]{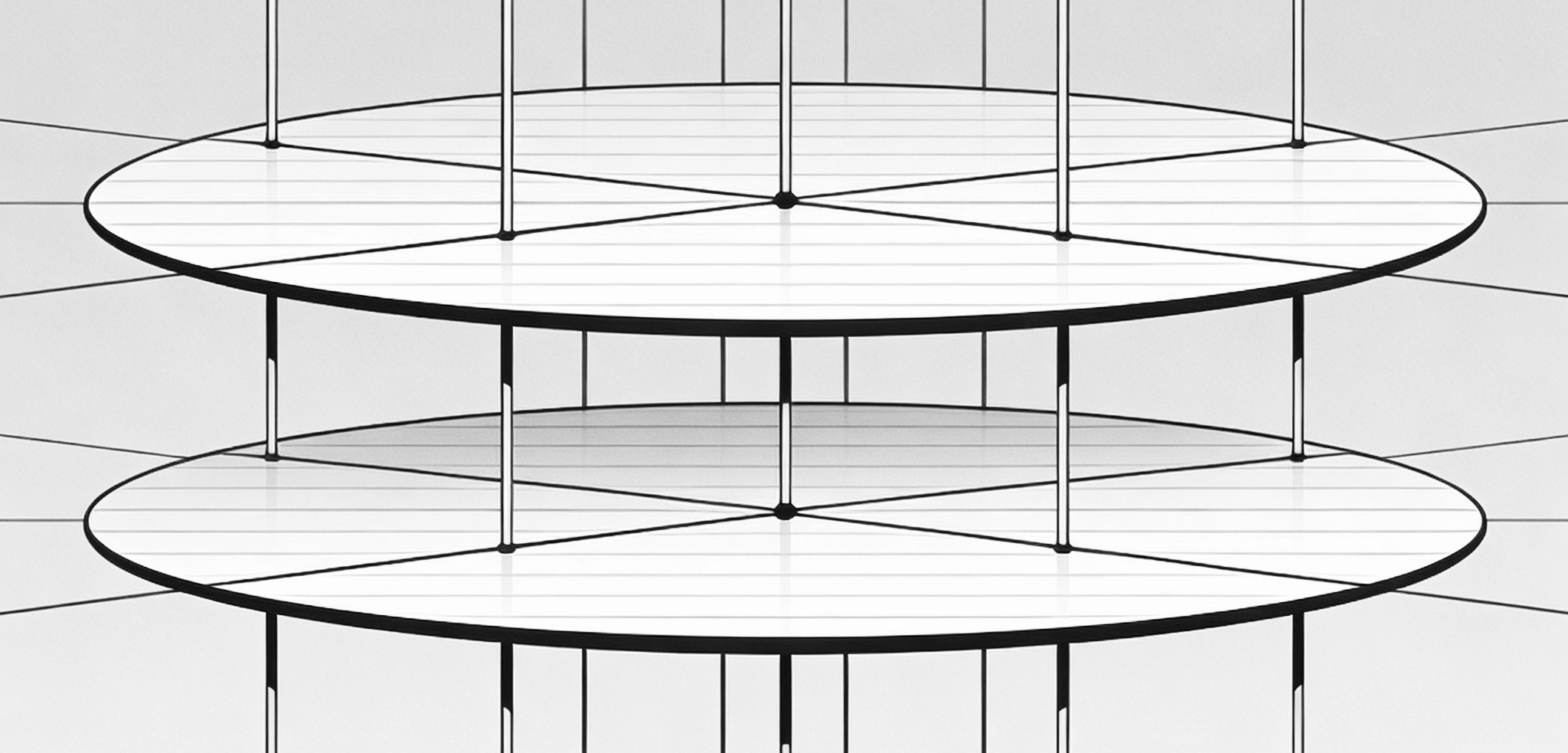}
	\caption{The parallel ellipsoids.}
	\label{f.2}
\end{figure}

   Case 2. Suppose that there exist $\eta_1,\eta_2\in\mathcal{O}_{\eta_0}$ such that
$$
\Psi_{\eta_1,\eta_2}(P_{\vartheta_0}(\vartheta_0))=P_{\vartheta_0}(\vartheta_0)\quad\text{and}\quad
\Psi_{\eta_1,\eta_2}(P_{\vartheta_0}(\vartheta_1))=P_{\vartheta_0}(\vartheta_2)
$$
for two distinct points $\vartheta_1,\vartheta_2\in B_\delta$. If $\|\Psi_{\eta_1,\eta_2}(P_{\vartheta_0}(\vartheta_1))\|_{H_{\vartheta_0}}<\|P_{\vartheta_0}(\vartheta_2)\|_{H_{\vartheta_0}}$ (otherwise replace $\Psi_{\eta_1,\eta_2}$ by its inverse), then the sequence $\{\Psi^n_{\eta_1,\eta_2}(P_{\vartheta_0}(\vartheta_1))\}_n$ converges to $\vartheta_0$. Furthermore,
$$
\{\Psi^n_{\eta_1,\eta_2}(st\,P_{\vartheta_0}(\vartheta_1)+s(1-t)\,P_{\vartheta_0}(\vartheta_2)) : 0\le t\le 1,\ s\in\mathbb{R}\}_n
$$
fills out every $\vartheta\in B_\delta$. Hence, for each such $\vartheta$, there exists a linear isomorphism between $\xi_0^\perp\cap\vartheta^\perp$ and $\xi_0^\perp\cap\vartheta_0^\perp$ (Condition 3).

Case 3. Suppose that there exist $\eta_1,\eta_2\in\mathcal{O}_{\eta_0}$ such that
$$
\Psi_{\eta_1,\eta_2}(P_{\vartheta_0}(\vartheta_0))=P_{\vartheta_0}(\vartheta_0)\quad\text{and}\quad
\Psi_{\eta_1,\eta_2}(P_{\vartheta_0}(\vartheta_1))=P_{\vartheta_0}(\vartheta_2)
$$
for two distinct points $\vartheta_1,\vartheta_2\in B_\delta$. If $\|\Psi_{\eta_1,\eta_2}(P_{\vartheta_0}(\vartheta_1))\|_{H_{\vartheta_0}}=\|P_{\vartheta_0}(\vartheta_2)\|_{H_{\vartheta_0}}$, then the sequence $\{\Psi^n_{\eta_1,\eta_2}(P_{\vartheta_0}(\vartheta_1))\}_n$ either forms a dense set of a circle or finite points in a circle. If it yields finite points in a circle, we can consider the geodesic segment $\gamma$ connecting $\varsigma_1$ and $\varsigma_2$, which are located on $\eta_1^\perp$ and $\eta_2^\perp$ respectively. By the local continuity of $\phi_{\eta}(B_\delta)$ with respect to $\eta$, there will be a continuous path 
    $$
    \bigcup_{\substack{\varsigma\in \gamma\\
    \gamma\perp \mathcal{A}_{\eta}}}\frac{A_{\eta}^T\varsigma}{\|A_{\eta}^T\varsigma\|}
    $$
    joining $\vartheta_1$ and $\vartheta_2$, denoted by $\sigma$. For any $\vartheta\in\sigma$ and corresponding $\eta$, we have
    $$
\Psi_{\eta_1,\eta}(P_{\vartheta_0}(\vartheta_0))=P_{\vartheta_0}(\vartheta_0)\quad\text{and}\quad
\Psi_{\eta_1,\eta}(P_{\vartheta_0}(\vartheta_1))=P_{\vartheta_0}(\vartheta)
$$
and
$$
\|\Psi_{\eta_1,\eta}(P_{\vartheta_0}(\vartheta_1))\|_{H_{\vartheta_0}}=\|P_{\vartheta_0}(\vartheta)\|_{H_{\vartheta_0}}.
$$
Otherwise, we fall into the other cases. By the continuity of $\sigma$, there exists some $\vartheta \in \sigma$ such that the sequence $\{\Psi^n_{\eta_1,\eta}(P_{\vartheta_0}(\vartheta_1))\}_n$ becomes dense in a circle. Furthermore, the family $\{\Psi^n_{\eta_1,\eta}\}_n$ is dense in the set of $SO(2)$-rotations; thus, by the closedness of linear transformations, there exists an $SO(2)$-affine invariant contained in $\bigcup_{\vartheta\in B_\delta}\vartheta^\perp\cap\xi_0^\perp$ (Condition 2).

Case 4. Suppose that there exists $\eta_1,\eta_2\in\mathcal{O}_{\eta_0}$ such that
$$
\Psi_{\eta_1,\eta_2}(P_{\vartheta_0}(\vartheta_0))=P_{\vartheta_0}(\vartheta_1)\quad\text{and}\quad
\Psi_{\eta_1,\eta_2}(P_{\vartheta_0}(\vartheta_2))=P_{\vartheta_0}(\vartheta_3)
$$
for two distinct points $\vartheta_1,\vartheta_2,\vartheta_3\in B_\delta$.
Next, we decompose $\Psi_{\eta_1,\eta_2}$ into two maps, $\Psi^1_{\eta_1,\eta_2}$ and $\Psi^2_{\eta_1,\eta_2}$, defined by
$$
\Psi^1_{\eta_1,\eta_2}(P_{\vartheta_0}(\vartheta)) = P_{\vartheta_0}(\vartheta) + P_{\vartheta_0}(\vartheta_1) - P_{\vartheta_0}(\vartheta_0)
$$
and
$$
\Psi^2_{\eta_1,\eta_2}(P_{\vartheta_0}(\vartheta)) = \Psi_{\eta_1,\eta_2}(P_{\vartheta_0}(\vartheta)) - P_{\vartheta_0}(\vartheta_1) + P_{\vartheta_0}(\vartheta_0).
$$
Clearly,
$$
\Psi_{\eta_1,\eta_2} = \Psi^2_{\eta_1,\eta_2} \circ \Psi^1_{\eta_1,\eta_2}.
$$
If $\Psi^2_{\eta_1,\eta_2}$ is the identity map, then $\Psi_{\eta_1,\eta_2}$ is a translation, which produces an $SO(2)$-affine invariant on $\bigcup_{\vartheta\in B_\delta}\vartheta^\perp\cap\xi_0^\perp$ (Condition 2).

If $\Psi^2_{\eta_1,\eta_2}$ is not the identity, we have
$$
\Psi^2_{\eta_1,\eta_2}(P_{\vartheta_0}(\vartheta_0)) = P_{\vartheta_0}(\vartheta_0)
$$
and
$$
\Psi^2_{\eta_1,\eta_2}(P_{\vartheta_0}(\vartheta_2)) 
= P_{\vartheta_0}(\vartheta_3) - P_{\vartheta_0}(\vartheta_1) + P_{\vartheta_0}(\vartheta_0) 
\neq P_{\vartheta_0}(\vartheta_2),
$$
which brings us back to Case 2 or Case 3.

When $\Psi^2_{\eta_1,\eta_2}$ falls into Case 2, we again obtain, for each such $\vartheta$, a linear isomorphism between $\xi_0^\perp\cap\vartheta^\perp$ and $\xi_0^\perp\cap\vartheta_0^\perp$ (Condition 3).

When $\Psi^2_{\eta_1,\eta_2}$ falls into Case 3, the composition $\Psi^2_{\eta_1,\eta_2}\circ \Psi^1_{\eta_1,\eta_2}$ yields an $SO(2)$-affine invariant on $\bigcup_{\vartheta\in B_\delta}\vartheta^\perp\cap\xi_0^\perp$ (Condition 2).

To complete the proof of the Lemma, assume that there exists an open set $U \subset B_\delta$ such that, for every $\vartheta\in U$, the subspace $\xi_0^\perp\cap\vartheta^\perp$ is not linearly equivalent, via any $SO(2)$-transformation, to a subspace of the form $\xi_0^\perp\cap\varrho^\perp$ for some $\varrho\in \sphere\cap\xi_0^\perp\cap\theta_0^\perp$. According to the claim, either the family of parallel ellipsoids forms an elliptic cylinder in $H_{\varsigma_0}$, or the $SO(2)$-affine transformation is tangent to $\sphere\cap\xi_0^\perp\cap\theta_0^\perp$. In either situation, there exists a region contained in all parallel ellipsoids $P_{\varsigma_0}(\mathcal{A}_{\eta})$ that does not intersect $\psi_{\eta}(B_\delta\cap \theta_0^\perp)$.

However, if we look at the set
$$
\bigcup_{\beta\in \mathcal{V}_{\beta_0}\cap \mathrm{span}\{\beta_0,\zeta_0\}} \beta^\perp\cap U_{\zeta_0},
$$
we obtain an open neighbourhood of $\zeta_0$. Consequently, within the ellipsoids $P_{\varsigma_0}(\mathcal{A}_{\eta_0})$ there exists a small neighbourhood $V$ of $P_{\varsigma_0}(\varsigma_0)$ such that, for every $P_{\varsigma_0}(\varsigma)\in V$, there is some $\beta\in \mathcal{V}_{\beta_0}\cap \mathrm{span}\{\beta_0,\zeta_0\}$ and a pair $[\zeta,\eta]$ with $\beta\perp \zeta$ for which the geodesic arc $C$ passing through $\beta$ and $\eta$ also passes through $\eta_0$. Moreover, $P_{\varsigma_0}(C)$ passes through $P_{\varsigma_0}(\varsigma)$. This implies that, for any $P_{\varsigma_0}(\varsigma)\in V$, we have $\psi^{-1}_\eta(P_{\varsigma_0}(\varsigma))\in P_{\vartheta_0}(B_\delta\cap \theta_0^\perp)$. Therefore, by the claim, either the subspace $\xi_0^\perp\cap\vartheta^\perp$ is linearly equivalent, via an $SO(2)$-transformation, to some $\xi_0^\perp\cap\varrho^\perp$ for a suitable $\varrho\in \sphere\cap\xi_0^\perp\cap\theta_0^\perp$, or, for any $\vartheta_1,\vartheta_2\in B_\delta$, there exists a linear transformation between $\xi_0^\perp\cap\vartheta_1^\perp$ and $\xi_0^\perp\cap\vartheta_2^\perp$.  
\end{proof}

Now we can prove the main theorem.
\begin{proof}[Proof of the Theorem \ref{thm 1.1}]
    We only need to show that for any $\vartheta_0\in \sphere\cap\xi_0^\perp$, there exists a neighbourhood $\mathcal{V}_{\vartheta_0}$ of $\vartheta_0$ such that for all $\vartheta\in \mathcal{V}_{\vartheta_0}$, $\xi_0^\perp\cap \vartheta^\perp$ are linearly equivalent to each other.

    Then by the closeness of linearly equivalence and $\sphere\cap \xi_0^\perp$ being connected, we can get $K$ to be the ellipsoid by Gromov's result \cite{Gromov1967}.
    
    To complete the proof, Lemma \ref{lem 3.2} guarantees the existence of an $SO(2)$-linear invariant in $\mathcal{V}_{\vartheta_0}$. Next, suppose there is a neighbourhood $\mathcal{V}_k$ in which an $SO(k)$-linear invariant exists. In that case, we can select an $(n-2)$-dimensional geodesic sphere $\theta^\perp\cap\xi_0^\perp$ that is tangent to the $SO(k)$-orbit at $\vartheta_0$. Applying Lemma \ref{lem 3.2} once more, we then obtain an $SO(k+1)$-linear invariant in a smaller neighbourhood $\mathcal{V}_{k+1}$. Proceeding inductively, we thus arrive at a neighbourhood $\mathcal{V}$ of $\vartheta_0$ such that, for every $\vartheta\in \mathcal{V}_{\vartheta_0}$, the spaces $\xi_0^\perp\cap \vartheta^\perp$ are all linearly equivalent to each other.
\end{proof}


\end{document}